\newtheorem{thm}{Theorem} 
  \newtheorem{prop}[thm]{Proposition}
\newtheorem{theorem}[thm]{Theorem}
\newtheorem{lemma}[thm]{Lemma}
\newtheorem{corollary}[thm]{Corollary}
\theoremstyle{remark}
\newtheorem{remark}[thm]{Remark}
\newcommand{\n}{\|\cdot\|}
\renewcommand{\epsilon}{\varepsilon}
\title[Moduli of convexity and smoothness of reflexive subspaces of L$^1$]{Moduli of convexity and smoothness of reflexive subspaces of L$^1$}
\author{S. Lajara}
\address{Departamento de Matem\'{a}ticas, Escuela de Ingenieros Industriales, Universidad de Castilla-La Mancha, Campus Universitario\\ 02071 Albacete, Spain}
\email{sebastian.lajara@uclm.es}
\author{A.J. Pallar\'{e}s}
\address{Departamento de Matem\'{a}ticas, Universidad de Murcia, Campus de Espinardo\\30100 Murcia, Spain}
\email{apall@um.es}
\author{S. Troyanski}
\address{Departamento de Matem\'{a}ticas, Universidad de Murcia, Campus de Espinardo\\30100 Murcia, Spain}
\email{stroya@um.es}
\thanks{The authors were partially supported by MTM2008-05396/MTM Fondos Feder and Fundaci\'on S\'eneca 088848/PI/08 CARM. S. Lajara was also supported by JCCM PEII11-0132-7661. S. Troyanski was also supported by the Institute of Mathematics and Informatics, Bulgarian Academy of Sciences, 
Bulgarian National Fund for Scientific Research contract DO 02-360/2008.}
\subjclass[2000]{46B03; 46B10; 46B20; 46B25}
\keywords{Reflexive subspaces of $L^1$, 
uniformly convex, uniformly smooth, Orlicz norm}
\begin{document}
\begin{abstract}
We show that for any probability measure $\mu$ there exists an equivalent norm on the space  $L^1(\mu)$ 
whose restriction to each reflexive subspace is uniformly   smooth and uniformly convex, with modulus of convexity of power type 2. This renorming provides also an estimate for the corresponding modulus of smoothness of such subspaces.
\end{abstract}




\maketitle

\section{Introduction and main results}

Let $X$ be a real Banach space with norm $\n$ and ·$S_X=\{x\in X: \|x\|=1\} $ its unit sphere. The moduli of convexity and smoothness of $X$ are the functions defined respectively by the formulas
$$\delta_X(\epsilon)= \inf\left\{ 1-\left\| \frac{x+y}{2} \right\| : x,y \in S_X,  
\|x-y\|=\epsilon\right\}, 0<\epsilon\leq 2,$$
and
$$\rho_X(\tau)= \sup\left\{\frac{\|x+\tau y\|+\|x-\tau y\|}{2}-1: x,y \in X 
\right\}, \tau >0.$$
The space $X$ is said to be {\em uniformly convex} if $\delta_X(\epsilon)>0$ for every $\epsilon>0$.  If, in addition, $\delta_X(\epsilon)\geq C \epsilon^q$, for some constant $C>0$ and $q\geq2$, we say that $X$ has modulus of convexity  of power type $q$.

The space $X$ is said to be {\em uniformly   smooth}
if $\lim_{\tau\to 0}\frac{\rho_X(\tau)}{\tau}=0$. 
If there exist constants $C>0$ and $1<p\leq 2$ such that $\rho_X(\tau)\leq C\tau^p$, we say that $X$ has modulus of smoothness of power type $p$.

It is well-known (see e.g. \cite[II, p. 63]{LZ} or \cite[Chapter 9.1]{FHHM}) that for $p>1$ the canonical norm on $L^p$ is uniformly convex and uniformly   smooth. Moreover, if $1<p\leq 2$ 
then $L^p$ has modulus of convexity of power type $2$ 
and modulus of smoothness of power type $p$. 

In \cite{Rosen73}, H.P. Rosenthal studies the subspaces  of $L^p(\mu)$, for a probability measure $\mu$, and shows that every reflexive subspace $X\subset L^1(\mu)$ embeds  in (is linearly homeomorphic to a subspace of)  $L^p(\nu)$ for some $p$, $1<p\leq 2$, and some probability measure $\nu$ such that $d\nu=\phi_X\, d\mu$ for some positive measurable function $\phi_X$. 
In particular,  $X$ admits an equivalent norm  with modulus of convexity of power type 2, and  modulus of smoothness of type $p$, for some $1<p\leq 2$. The renorming in this assertion depends naturally on the specific subspace $X$. In \cite{bretDC,Lav} it was shown that the class of reflexive subspaces of $L^1(\mu)$ is quite big. For example reflexive subspaces of Orlicz function spaces 
$L_M([0,1])$ with natural requirements  on 
the Orlicz function $M$ embed in $L^1([0,1])$.

The space $L^1(\mu)$ admits an equivalent norm, namely an Orlicz norm, whose restriction to every reflexive subspace is uniformly convex \cite{DKT}. 
An analogous statement for uniform smoothness was proved in \cite{FMZ} using some transfer techniques. Both results provide us a weaker version of Rosenthal theorem that every reflexive subspace of $L^1(\mu)$ is superreflexive (i.e. admits equivalent norms that are uniformly convex and/or uniformly   smooth). However, these results do not give
any 
 information 
 about the asymptotic behaviour at 0 of 
 the moduli of convexity and smoothness. 

The aim of this note is to construct an equivalent norm on $L^1(\mu)$ whose restriction to every reflexive subspace yields quantitative estimates of the moduli 
by means of the following indexes defined in terms of the distributions of  functions, $F_f(t)=\mu(\{|f|>t\})$, $f\in L^1(\mu)$. Namely, for a subspace $X$ of $L^1(\mu)$ we consider
\begin{align}
C_X(t)&=\inf \{ F_f(t): f\in X, \|f\|_1=1\}, 0<t<1, \text{ and} \label{eq:C-X}\\
G_X(t)&=\sup \left\{ \int_t^{+\infty} F_f(u)\, du: f\in X, \|f\|_1=1\right\}, t>0. \label{eq:G_X}
\end{align}

In the rest of this section we formulate our results   leaving the proofs for the next sections.
Our main theorem is the following

\begin{theorem} \label{maintheorem} 
Let  $\mu$ be a probability measure. Then there exist an equivalent norm $\n$ on $L^1(\mu)$ and positive constants $K_1$ and $K_2$  such that for
the moduli of convexity and smoothness of  the norm $\n$ 
in every subspace $X$  of  $ L^1(\mu)$, 
the inequalities hold
\begin{equation}\tag{A} \delta_X(\varepsilon)\geq K_X \epsilon^2, \text{ for } 0< \varepsilon\leq 2, \text{ where } K_X= K_1 \sup_{0<t<1}\{t^2C_{X}^3(t)
\},
\end{equation} 
 and 
\begin{equation}\tag{B}
\rho_X(\tau)\leq K_2 \tau^2  \int_0^{1/\tau} G_X(t) dt \tau>0.
\end{equation}
\end{theorem}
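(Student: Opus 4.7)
I would introduce on $L^{1}(\mu)$ an equivalent Orlicz--Luxemburg norm $\|f\|:=\inf\{\lambda>0:\int M(|f|/\lambda)\,d\mu\leq 1\}$ built from the Orlicz function $M(t)=t^{2}$ for $t\in[0,1]$ and $M(t)=2t-1$ for $t\geq 1$. This $M$ is convex, $C^{1}$, satisfies the $\Delta_{2}$-condition, and has a quadratic shape near the origin together with linear growth at infinity; elementary Orlicz inequalities give $\|f\|_{1}\leq\|f\|\leq 2\|f\|_{1}$, so the norms are equivalent. Writing $\Phi(u):=\int M(|u|)\,d\mu$, I would treat both estimates (A) and (B) by establishing the relevant pointwise inequality for $M$, integrating to obtain an inequality for $\Phi$, and finally converting this into a statement about $\|\cdot\|$ via the $\Delta_{2}$-condition.

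For (A), the quadratic region of $M$ furnishes the pointwise Clarkson-type defect
$$M\!\left(\frac{a+b}{2}\right)\leq\frac{M(a)+M(b)}{2}-\tfrac{1}{2}(a-b)^{2}\mathbf{1}_{\{|a|,|b|\leq 1\}}.$$
Integration against $f,g\in X$ with $\Phi(f)=\Phi(g)=1$ yields $\Phi((f+g)/2)\leq 1-\tfrac{1}{2}\int(f-g)^{2}\mathbf{1}_{\{|f|,|g|\leq 1\}}\,d\mu$. The heart of the argument is to bound the remaining integral from below in terms of $\varepsilon=\|f-g\|$ and the thickness index $C_{X}$. Since every $h\in X$ with $\|h\|_{1}=1$ satisfies $\mu(\{|h|>t\})\geq C_{X}(t)$, the normalised difference $(f-g)/\|f-g\|_{1}$ carries substantial $L^{2}$-mass at every level; after discarding $\{|f|>1\}\cup\{|g|>1\}$---a set whose measure is controllably small because $\Phi(f),\Phi(g)$ are bounded---enough quadratic mass survives to produce a lower bound of order $t^{2}C_{X}^{3}(t)$ at each level $t$, and taking the supremum in $t$ yields the declared $K_{X}$.

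For (B), I would use the smoothness of $M$: with $M''(s)=2\,\mathbf{1}_{[-1,1]}(s)$ almost everywhere, a second-order Taylor expansion gives the pointwise bound
$$M(a+\tau b)+M(a-\tau b)-2M(a)\leq 2\tau^{2}b^{2}\mathbf{1}_{\{|a|\leq 1+\tau|b|\}}.$$
Integrating and passing from $\Phi$ to $\|\cdot\|$ reduces the task to bounding $\int g^{2}\mathbf{1}_{\{|f|\leq 1+\tau|g|\}}\,d\mu$ for $f,g\in X$ with controlled norm. The layer-cake formula rewrites this as $2\int_{0}^{\infty}t\,\mu(\{|g|\mathbf{1}_{\{|f|\leq 1+\tau|g|\}}>t\})\,dt$, and a truncation at height $1/\tau$, combined with the equivalence $\|\cdot\|\sim\|\cdot\|_{1}$ to compare level sets of $g$ with those governed by $G_{X}$, produces a bound by $\int_{0}^{1/\tau}G_{X}(t)\,dt$, yielding (B).

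The principal obstacle will be the passage from estimates on the auxiliary functional $\Phi$ to estimates on the Luxemburg norm $\|\cdot\|$ itself: because the norm is defined implicitly by $\Phi(f/\|f\|)=1$, turning an additive decrement $\Phi(h)\leq 1-\eta$ into a multiplicative decrement $\|h\|\leq 1-\eta'$ requires a careful comparison of level sets of $\Phi$ under dilation, where the $\Delta_{2}$-condition enters quantitatively. A secondary difficulty is that the defect and second-order contributions of $M$ vanish outside the quadratic region $|a|\leq 1$, so the truncation thresholds must be calibrated so that the surviving mass is controlled uniformly across all subspaces $X\subset L^{1}(\mu)$ purely through the indices $C_{X}$ and $G_{X}$.
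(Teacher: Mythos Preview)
Your Orlicz function $M(t)=t^{2}$ on $[0,1]$, $M(t)=2t-1$ on $[1,\infty)$ is the natural first guess, but it has a fatal defect for part (A): since $M''\equiv 0$ outside $[-1,1]$, the Clarkson defect $\frac{M(a)+M(b)}{2}-M(\tfrac{a+b}{2})$ vanishes whenever both $a,b$ lie in the linear region, and your pointwise inequality accordingly only yields the integral $\int(f-g)^{2}\mathbf 1_{\{|f|\le 1,\,|g|\le 1\}}\,d\mu$. Your assertion that $\{|f|>1\}\cup\{|g|>1\}$ has ``controllably small'' measure is simply false: for any $\varepsilon>0$ the function $f=(1+\varepsilon)\chi_{A}$ with $\mu(A)=1/M(1+\varepsilon)$ has $\Phi(f)=1$ yet $\mu(\{|f|>1\})=1/(1+2\varepsilon)\to 1$. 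There is no uniform lower bound on the surviving quadratic mass, and in particular no visible mechanism in your argument producing the exponent $3$ in $C_{X}^{3}(t)$.

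The paper repairs this by taking $M''(t)=8/(1+t)^{2}$ for $t>1$, so that $M''$ is strictly positive with a controlled quadratic decay. The key pointwise lower bound then reads
\[
M(a)+M(b)-2M\Bigl(\frac{a+b}{2}\Bigr)\ \ge\ \tfrac14\,M''(c)\,(a-b)^{2},\qquad c=\max\{|a|,|b|\},
\]
and one \emph{chooses} a truncation level $s\asymp 1/C_{X}(t)$ via Chebyshev so that on a set of measure $\gtrsim C_{X}(t)$ one has $|f|,|g|\le s$. The price paid is the factor $M''(2s)\asymp s^{-2}\asymp C_{X}(t)^{2}$; combined with the measure $\gtrsim C_{X}(t)$ and the level $t^{2}$, this is exactly where $t^{2}C_{X}^{3}(t)$ comes from. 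With your $M$ that factor is zero once $s>1/2$, and the scheme collapses.

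A smaller problem in (B): your Taylor bound $M(a+\tau b)+M(a-\tau b)-2M(a)\le 2\tau^{2}b^{2}\mathbf 1_{\{|a|\le 1+\tau|b|\}}$ leads after integration to $\tau^{2}\int g^{2}\mathbf 1_{\{|f|\le 1+\tau|g|\}}\,d\mu$, and this need not be finite for $g\in L^{1}(\mu)$ (the indicator does not bound $|g|$). The right pointwise inequality is $\le C\,M(\tau b)$, valid because $M''$ is decreasing and $M$ satisfies $\Delta_{2}$; then $\int M(\tau g)\,d\mu$ is automatically finite, one splits it at $|\tau g|=1$ into a quadratic and a linear piece, and Fubini on the distribution $F_{g}$ produces $\tau^{2}\int_{0}^{1/\tau}G_{X}(t)\,dt$. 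Your outline for (B) is close to this once the pointwise bound is corrected.
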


Note that the estimate (A) is meaningful  only if $C_X(t)>0$ for some $t>0$ and in this case it implies that the restriction to $X$ of the new norm is uniformly convex with modulus of convexity of power type 2, and consequently $X$ is superreflexive. 

We also observe that the estimate (B) would provide some useful information only if 
$\lim_{t\to+\infty} G_X(t)=0$ ($G_X(t)$ is non-increasing). For 
better  understanding of the
function $G_X(t)$, note  that using  Rieman-Stieltjes integrals and integration by parts, we have
$$
\int_{\{| f |>t\}} |f| \,d\mu=-\int_t^{\infty} x \,dF_f(x) =tF_f(t)+\int_t^{+\infty} F_f(u)du\geq \int_t^{+\infty} F_f(u)du.$$%
Thus, for all $t>0$, 
\begin{equation}\label{GXt:1}1\geq \sup  \left\{ \int_{\{| f |>t\}} |f| \,d\mu: f\in X, \|f\|_1=1 \right\}\geq G_X(t)
.\end{equation}
 Now it is not difficult to verify that $\lim_{t\to+\infty} G_X(t)=0$ implies that $$\lim_{\tau\to0} \tau\int_{0}^{1/\tau} G_X(t)dt=0,$$ and then (B)  in turn implies that the restriction to $X$ of the norm in the theorem is uniformly   smooth, and that $X$ is superreflexive.

For the converse, when $X$ is reflexive, its unit sphere must be a relatively weakly compact subset of $L^1(\mu)$ and after the 
characterization due to Dunford and Pettis (see e.g. \cite[5.2.9]{AlbKal}) must be {\em equi-integrable.
} Recall that a bounded subset $W\subset L^1(\mu)$ is called equi-integrable (or uniformly integrable) if
\begin{equation}\label{eq:uniformintegrable1}
\lim_{\mu(A)\to 0} \sup\left\{\int_A |f| \, d\mu: f\in W \right\} =0,
\end{equation}
which is equivalent to
\begin{equation}\label{eq:uniformintegrable2}
\lim_{t\to+\infty} \sup  \left\{ \int_{\{| f |>t\}} |f| \,d\mu: f\in W \right\}=0.
\end{equation}

Clearly, from \eqref{GXt:1} and \eqref{eq:uniformintegrable2} we can apply (B) as in the above argument to get that $X$ is uniformly smooth renormable. 

Also from the equi-integrability of the sphere $S_X$ we deduce that $C_X(t)>0$ for every $t\in(0,1)$ and we can apply (A) to obtain that $X$ is uniformly convexifiable
. Indeed, assume the contrary, i.e there is  $t\in(0,1)$ with $C_X(t)=0$, and  find a sequence $f_n\in S_X$ such that $F_{f_n}(t)\to 0$. Now, for $A_n=\{|f_n|>t\}$  we have  $\mu(A_n)\to 0$ and   from \eqref{eq:uniformintegrable1} we get $\|f_n\chi_{A_n}\|_1=\int_{A_n}|f_n|d\mu < \epsilon$ for  $\epsilon=1-t  >0$ and $n$ big enough. Let us denote by 
$B_n=\{|f_n|\leq t\}$ the complement of $A_n$, and observe that  we get the contradiction:
$$t \geq t\mu(B_n)\geq \|f\chi_{B_n}\|_1=1-\|f\chi_{A_n}\|_1 >1-\epsilon= t.$$

After these remarks, we are ready to set the following characterization

\begin{corollary}\label{corolario}
Let $\mu$ be a probability measure. For a closed subspace $X\subset L^1(\mu)$, the following are equivalent
\begin{enumerate}\renewcommand{\labelenumi}{(\roman{enumi})}
\item $X$ is superreflexive;
\item $X$ is reflexive;
\item $C_X(t)>0$ for some $t\in (0,1)$;
\item $\lim_{t\to+\infty} G_X(t)=0$. 
\end{enumerate}
\end{corollary}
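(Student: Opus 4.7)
The plan is to arrange the four conditions in a cycle $(\text{i}) \Rightarrow (\text{ii}) \Rightarrow (\text{iii}) \Rightarrow (\text{i})$ and $(\text{ii}) \Rightarrow (\text{iv}) \Rightarrow (\text{i})$, using Theorem~\ref{maintheorem} together with the remarks that already accompany its statement. Almost all of the pieces are in place in the surrounding discussion, so the task is mainly to compile them into a single cyclic argument.

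First, (i) $\Rightarrow$ (ii) is immediate from the definition of superreflexivity. For (ii) $\Rightarrow$ (iii) and (ii) $\Rightarrow$ (iv), I would invoke the Dunford--Pettis characterization: reflexivity of $X$ implies that $S_X$ is relatively weakly compact in $L^1(\mu)$ and hence equi-integrable, so both \eqref{eq:uniformintegrable1} and \eqref{eq:uniformintegrable2} hold with $W = S_X$. The contradiction argument written out just before the statement of the corollary then yields (iii), while combining \eqref{GXt:1} with \eqref{eq:uniformintegrable2} yields (iv).

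To close the cycle I would use the two estimates of Theorem~\ref{maintheorem} separately. If (iii) holds, pick $t_0 \in (0,1)$ with $C_X(t_0) > 0$; then $K_X \geq K_1 t_0^2 C_X(t_0)^3 > 0$, and estimate (A) shows that the restriction of $\n$ to $X$ is uniformly convex with modulus of power type~$2$, hence $X$ is superreflexive. If (iv) holds, the elementary computation already announced after Theorem~\ref{maintheorem} (split $\int_0^{1/\tau}$ as $\int_0^T + \int_T^{1/\tau}$ and use that $G_X$ is non-increasing with $G_X(t) \to 0$) gives $\lim_{\tau \to 0} \tau \int_0^{1/\tau} G_X(t)\, dt = 0$, so estimate (B) makes the restriction of $\n$ to $X$ uniformly smooth, and again $X$ is superreflexive by the standard duality with uniform convexity.

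There is essentially no serious obstacle left: the heavy lifting lies in Theorem~\ref{maintheorem}, and the corollary is a bookkeeping exercise that only combines this theorem with the Dunford--Pettis criterion and the classical fact that admitting an equivalent uniformly convex or uniformly smooth norm forces superreflexivity. The only point where one should be a little careful is the averaging step $\lim_{\tau \to 0} \tau \int_0^{1/\tau} G_X(t)\, dt = 0$ in the (iv) $\Rightarrow$ (i) direction, but this is a routine Ces\`aro-type estimate and does not require any new idea.
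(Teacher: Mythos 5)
Your proposal is correct and follows essentially the same route as the paper: the corollary is derived exactly from the remarks preceding it, namely Dunford--Pettis equi-integrability of $S_X$ for (ii)$\Rightarrow$(iii) and (ii)$\Rightarrow$(iv), estimate (A) for (iii)$\Rightarrow$(i), and estimate (B) together with the averaging fact $\lim_{\tau\to0}\tau\int_0^{1/\tau}G_X(t)\,dt=0$ for (iv)$\Rightarrow$(i). The only point worth noting is that your cyclic bookkeeping is sound and the Ces\`aro-type step you flag is indeed the only computation not written out explicitly in the paper.
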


When we have good estimations for $G_X(t)$,  we can apply (B) of Theorem \ref{maintheorem} in order to approach the power type for the modulus of smoothness. 

For a first example, a direct application of the theorem gives us 
\begin{corollary}  Let  $X\subset L^1(\mu)$ be a closed subspace such that   $G_X(t)$ is integrable in $[0,+\infty)$.    
 Then
 $$    \rho_X(\tau)   \leq K \,  \tau^2, $$  where $K=K_2 \int_0^{+\infty} G_X(t)dt <+\infty$. In this case, $X$ with the norm from Theorem \ref{maintheorem} has power type 2 for both  moduli of convexity and smoothness. 
 \end{corollary}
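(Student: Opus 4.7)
The plan is to apply the two bounds of Theorem \ref{maintheorem} directly, using the integrability hypothesis to strip the $\tau$-dependence from (B) and invoking Corollary \ref{corolario} to ensure that the constant in (A) is strictly positive.

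First I would establish the smoothness estimate. Since $G_X\ge 0$, for every $\tau>0$ we trivially have
$$\int_0^{1/\tau} G_X(t)\,dt \;\leq\; \int_0^{+\infty} G_X(t)\,dt,$$
and by hypothesis the right-hand side is finite. Substituting this bound into inequality (B) of Theorem \ref{maintheorem} yields
$$\rho_X(\tau)\;\leq\; K_2\tau^2\int_0^{+\infty}G_X(t)\,dt \;=\; K\tau^2,$$
which is the claimed bound, and the value of $K$ agrees with the one in the statement.

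Next I would address the modulus of convexity. By inequality (A) of Theorem \ref{maintheorem}, it suffices to check that $K_X>0$, i.e.\ that $C_X(t)>0$ for some $t\in(0,1)$. The function $G_X$ is non-negative and non-increasing, so if $\lim_{t\to+\infty}G_X(t)=\ell>0$ then $\int_0^{+\infty}G_X\ge \int_1^{+\infty}\ell\,dt=+\infty$, contradicting the integrability hypothesis. Therefore $\lim_{t\to+\infty}G_X(t)=0$, and by Corollary \ref{corolario} the subspace $X$ is reflexive and $C_X(t)>0$ for some $t\in(0,1)$. Consequently $K_X=K_1\sup_{0<t<1}t^2 C_X^3(t)>0$, and (A) gives $\delta_X(\epsilon)\ge K_X\epsilon^2$ on $(0,2]$.

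There is essentially no genuine obstacle here: the corollary is a straightforward specialization of Theorem \ref{maintheorem}. The only step that is not an immediate substitution is the positivity of $K_X$, which is handled by the tail-vanishing argument above combined with the characterization of reflexivity in Corollary \ref{corolario}.
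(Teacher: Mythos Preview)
Your proof is correct and matches the paper's approach: the paper states this corollary as ``a direct application of the theorem'' without further argument, and your substitution of $\int_0^{1/\tau}G_X\le\int_0^{\infty}G_X$ into (B) is precisely that direct application. Your additional justification that $K_X>0$ (via $G_X\to 0$ and Corollary \ref{corolario}) is not spelled out in the paper at this point but is exactly the reasoning developed in the paragraphs preceding Corollary \ref{corolario}, so there is no genuine difference in method.
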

For instance, if $R$ 
is the space generated by the Rademacher functions in $L^1([0,1])$, 
there is  a well known upper bound for its distribution function (see, for example, \cite{MoSm}). Using this estimate and  
the Khintchine inequality we get
$$G_R(t)=\sup\left\{\int_t^{+\infty}F_g(x)dx: g\in R, \|g\|_1=1\right\}\leq \int_t^{+\infty} e^{-\frac{1}{2} c_1^2 x^2}dx\leq c_2 e^{-\frac{1}{2} c_1^2 t^2}$$
for some constants $c_1$ and $c_2$. Thus, $G_R(t)$ is integrable
 and  $R$ is 2-uniformly smooth and 2-uniformly convex.

For another applications, observe that working with the representation as Riemann-Stieltjes integrals it is not difficult to prove that  when $X$ is in $L^p(\mu)\subset L^1(\mu)$, $p>1$, and the norms $\n_p$ and $\n_1$ are equivalent on $X$, then 
$$ t^{p-1} G_X(t)\leq \frac{1}{p} \sup\left\{\int_{\{|g|>t\}}|g|^pd\mu: g\in X, \|g\|_1=1\right\},$$ 
and  $G_X(t)=o({1}/{t^{p-1}})$.  

Let us assume that $G_X(t)=O (1/t^{p-1})$. 
In this case, for  $p>2$,  $G_X(t)$ is integrable on $[0,+\infty)$  and then the modulus of smoothnes is of type 2. For the case $1<p<2$ it is easy to prove that $\int_0^{\frac 1\tau} G_X(t)\leq c+k \tau^{p-2}$ for some constants $c$ and $k$, and bringing this inequality to (B) in Theorem \ref{maintheorem}	
 we obtain the power type $p$ for the modulus of smoothness. Finally, the case $p=2$ is similar to the above, but now  taking primitives we obtain that $\int_0^{\frac 1\tau} G_X(t)\leq k |\log \tau|$ for some constant $k$. Thus

\begin{corollary}\label{smooth:type} Let $\mu$ be a probability measure and  $X\subset L^1(\mu)$ be a subspace such that $G_X(t)=O (1/t^{p-1})$ with $p>1$. Then 
\begin{enumerate} \item 
$\rho_X(\tau)=O( \tau^2)$ if $2<p$,
\item 
$\rho_X(\tau)=O( \tau^{p})$ if $1<p<2$,
 and  \item $\rho_X(\tau)=O( \tau^2 |\log \tau|)$ if $p=2$.
\end{enumerate}
\end{corollary}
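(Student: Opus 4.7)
The plan is to apply estimate (B) of Theorem \ref{maintheorem} directly, reducing the problem to a calculus exercise: estimating the integral $\int_0^{1/\tau} G_X(t)\,dt$ under the decay hypothesis $G_X(t)=O(1/t^{p-1})$. Fix $t_0\geq 1$ and a constant $C>0$ such that $G_X(t)\leq C/t^{p-1}$ for all $t\geq t_0$; recall from inequality \eqref{GXt:1} that $G_X(t)\leq 1$ everywhere, so the contribution $\int_0^{t_0} G_X(t)\,dt\leq t_0$ is a universal constant. For $\tau$ bounded away from $0$ the conclusions in each case hold trivially (the modulus of smoothness is always $O(\tau)$), so we may assume $1/\tau\geq t_0$ and split
$$\int_0^{1/\tau} G_X(t)\,dt \;=\; \int_0^{t_0} G_X(t)\,dt \;+\; \int_{t_0}^{1/\tau} G_X(t)\,dt.$$

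Next I would handle the three cases separately. When $p>2$ the function $t^{1-p}$ is integrable on $[t_0,+\infty)$, hence $G_X\in L^1([0,+\infty))$ and the previous corollary yields $\rho_X(\tau)=O(\tau^2)$. When $1<p<2$ a direct computation gives
$$\int_{t_0}^{1/\tau} t^{1-p}\,dt \;=\; \frac{1}{2-p}\bigl((1/\tau)^{2-p}-t_0^{2-p}\bigr) \;=\; O(\tau^{p-2}),$$
so multiplying by $\tau^2$ and inserting in (B) produces $\rho_X(\tau)=O(\tau^{p})$. When $p=2$ the primitive is logarithmic:
$$\int_{t_0}^{1/\tau} t^{-1}\,dt \;=\; \log(1/\tau)-\log t_0 \;=\; O(|\log\tau|) \quad(\tau\to 0^+),$$
and the factor $\tau^2$ from (B) yields $\rho_X(\tau)=O(\tau^2|\log\tau|)$. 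In each case the contribution $\tau^2\int_0^{t_0}G_X(t)\,dt=O(\tau^2)$ coming from the bounded piece is absorbed into the stated bound, using $\tau^2\leq \tau^p$ for $\tau\leq 1$ when $1<p\leq 2$.

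I do not anticipate a genuine obstacle here: once Theorem \ref{maintheorem} is available, the corollary reduces to the asymptotic evaluation of $\int^{1/\tau} t^{1-p}\,dt$ in three regimes, and the only mild care required is to separate the ``small $t$'' portion (where the hypothesis $G_X(t)\leq C/t^{p-1}$ may fail but where the trivial bound $G_X\leq 1$ suffices) from the ``large $t$'' tail that drives the asymptotic behavior.
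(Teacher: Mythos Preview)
Your proposal is correct and follows essentially the same approach as the paper: in both cases the corollary is derived from estimate~(B) of Theorem~\ref{maintheorem} by computing the asymptotics of $\int_0^{1/\tau} t^{1-p}\,dt$ in the three regimes $p>2$, $1<p<2$, and $p=2$. Your splitting at $t_0$ and use of the trivial bound $G_X\leq 1$ on $[0,t_0]$ merely make explicit what the paper leaves implicit in writing $\int_0^{1/\tau} G_X(t)\,dt \leq c + k\tau^{p-2}$ (respectively $\leq k|\log\tau|$).
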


Let us note that  $G_X(t)=O (1/t^{p-1})$ implies that each function of $X$ is in $L^{p'}(\mu)$ for $p'<p$ and that the corollary can be applied to the subspaces $X$ in $L^p(\mu)\subset L^1(\mu)$, $p>1$, where the norms $\n_p$ and $\n_1$ are equivalent, giving that the norm from Theorem \ref{maintheorem} is $\min\{p,2\}$ uniformly smooth except in the case $p=2$. However  for this case some facts of the proof of the main theorem show that the new norm is 2 uniformly smooth (see Remark \ref{remark1}).

In general, the estimate (B) in Theorem \ref{maintheorem} does not give us the power type behaviour of $\rho_X$. 
 To get examples where this happens, we focus our attention on reflexive subspaces $E_f$ of $L^1([0,1])$ that can be generated using the Rademacher functions and different positive densities (weights) $f\in L^1([0,1] )$.

Note that   
in \cite{veraar}  a weighted version of the Khintchine inequalities was proved, showing that for a  strictly positive weight $f \in  L^p([0,1])$, $p>1$, the space $E_f$ defined as the closed span in $\|.\|_1$ of the set $\{f r_n: n\in \mathbb{N}\}$, is a copy of $\ell_2$. In the next proposition we see that for any positive weight $f\in L^1([0,1])$, and in particular if $\int_0^1 f(x)^p dx=+\infty$ for every $p>1$, it is possible to find copies of $\ell_2$ in $L^1([0,1])$ defined in a similar way.

\begin{prop}\label{subespacioR}
Let $f\in L^1[0,1]$, $f\geq 0$ and $\|f\|_1=1$, and consider the sequence of Rademacher functions $r_n(x)=\mathop{sign}(\sin(2^n\pi x))$. Then, there is a subsequence $r_{n_k}$ such that the space $E_f$ generated by the sequence $(f r_{n_k})_k$ is isomorphic to the reflexive space $R$ generated by the Rademacher sequence $(r_k)_k$, and through the Khintchine inequalities, is a copy of $\ell_2$.
\end{prop}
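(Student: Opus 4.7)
The plan is to identify $E_f$ with a subspace of $L^1(\mu)$, where $d\mu:=f\,dx$ is a probability measure, and to verify a Khintchine-type inequality for a suitably chosen Rademacher subsequence in $L^1(\mu)$. Since $f\ge 0$ and $\|f\|_1=1$, the map $g\mapsto fg$ is an isometry from $L^1(\mu)$ into $L^1[0,1]$, so $E_f$ is isometric to $\overline{\operatorname{span}}\{r_{n_k}\}$ computed in $L^1(\mu)$. The proposition then reduces to producing a subsequence for which
\[
c_1\Bigl(\sum_k a_k^2\Bigr)^{1/2}\le\Bigl\|\sum_k a_k r_{n_k}\Bigr\|_{L^1(\mu)}\le c_2\Bigl(\sum_k a_k^2\Bigr)^{1/2}
\]
uniformly in $(a_k)\in\ell_2$; since the classical Khintchine inequality gives the same two-sided estimate for the full Rademacher sequence in $L^1[0,1]$, this forces $E_f\cong R\cong\ell_2$.

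The selection hinges on a Riemann--Lebesgue property for the Walsh system $\{w_N\}_{N\ge 1}$ of finite products of distinct Rademachers: as the $w_N$ form an orthonormal basis of $L^2[0,1]$ and $L^2$ is dense in $L^1$, one has $\int h\,w_N\,dx\to 0$ for every $h\in L^1[0,1]$. I construct $n_1<n_2<\cdots$ greedily: having chosen $n_1,\dots,n_{k-1}$, I pick $n_k$ so large that for every subset $I\subset\{1,\dots,k-1\}$ with $|I|\in\{1,3\}$,
\[
\Bigl|\int_0^1 f\,r_{n_k}\prod_{i\in I}r_{n_i}\,dx\Bigr|<\eta_k,
\]
where the tolerances $(\eta_k)$ decay fast (say $\eta_k=\delta\, k^{-3}2^{-k}$ with $\delta>0$ small). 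Each such integral is a Walsh coefficient of an $L^1$ function and tends to $0$ as $n_k\to\infty$, so the finitely many smallness conditions at each step are simultaneously satisfiable.

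Writing $S=\sum_k a_k r_{n_k}$ and expanding $S^2$ and $S^4$ while using $r_{n_k}^2=1$ to group summands by the multiplicity pattern of repeated indices, I obtain
\[
\int S^2\,d\mu=\sum_k a_k^2+\sum_{k\ne l}a_ka_l\,\epsilon_{k,l},
\]
with $\epsilon_{k,l}=\int f r_{n_k}r_{n_l}\,dx$, and an analogous expansion of $\int S^4\,d\mu$ whose principal term is $3(\sum_k a_k^2)^2-2\sum_k a_k^4$ and whose remaining \emph{mixed} contributions involve Walsh moments of orders $2$ and $4$, all bounded by the $\eta_k$'s. AM--GM ($|a_ka_l|\le\tfrac12(a_k^2+a_l^2)$) combined with the smallness of the $\eta_k$ yields $(1-\delta)\sum_k a_k^2\le\int S^2\,d\mu\le(1+\delta)\sum_k a_k^2$ and $\int S^4\,d\mu\le C(\sum_k a_k^2)^2$. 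The upper Khintchine bound for $\|S\|_{L^1(\mu)}$ is then immediate from $\|\cdot\|_{L^1(\mu)}\le\|\cdot\|_{L^2(\mu)}$; the lower bound follows from H\"older's interpolation $\|S\|_{L^2(\mu)}^2\le\|S\|_{L^1(\mu)}^{2/3}\|S\|_{L^4(\mu)}^{4/3}$ together with the moment estimates, giving $\|S\|_{L^1(\mu)}\ge (1-\delta)^{3/2}C^{-1/2}(\sum_k a_k^2)^{1/2}$. The main obstacle is the combinatorial bookkeeping of the fourth-moment mixed terms: one must verify that the sum of their absolute values stays below $\delta(\sum_k a_k^2)^2$ uniformly in $(a_k)\in\ell_2$, which is what dictates the aggressive decay of the thresholds $\eta_k$.
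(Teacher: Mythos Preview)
Your approach is correct but genuinely different from the paper's. The paper proceeds geometrically: via an auxiliary partitioning lemma it builds, for each $k$, a family $\{A_s^k\}_{s\in\{0,1\}^k}$ of dyadic sets with $\int_{A_s^k} f\,d\mu$ uniformly comparable to $\mu(I_s^k)=2^{-k}$, and chooses $n_k$ so that $r_{n_k}$ takes the value $(-1)^{s_k}$ on $A_s^k$. Since $\sum_{j\le k} a_j r_j$ is constant on $I_s^k$ with the same value as $\sum_{j\le k} a_j r_{n_j}$ on $A_s^k$, this yields a direct $L^1$--$L^1$ comparison
\[
\eta\,\Bigl\|\sum_j a_j r_j\Bigr\|_1 \le \Bigl\|\sum_j a_j f r_{n_j}\Bigr\|_1 \le \eta^{-1}\Bigl\|\sum_j a_j r_j\Bigr\|_1,
\]
with $\eta<1$ arbitrarily close to $1$, i.e.\ an explicit isomorphism $E_f\simeq R$. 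Your argument instead proves a Khintchine inequality in $L^1(f\,dx)$ by the moment method: you force the order-$2$ and order-$4$ Walsh coefficients of $f$ along the chosen subsequence to be small via Riemann--Lebesgue, then compare $\|S\|_1$ with $\|S\|_2$ and $\|S\|_4$ by H\"older interpolation. This is the classical Paley--Zygmund route; it gives $E_f\cong\ell_2$ directly and $E_f\cong R$ only via the usual Khintchine inequality. The paper's construction is more combinatorial and produces an isomorphism constant as close to $1$ as desired, while your argument is more analytic and avoids the dyadic bookkeeping of the partition lemma; both are perfectly valid proofs of the proposition, and your fourth-moment bookkeeping (which you flag as the main point to check) does go through with the stated choice $\eta_k\asymp k^{-3}2^{-k}$.
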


We postpone the proof for the last section where we also find a function $f$ in $ L^1([0,1])$,
with $\int_0^1|f(x)|^pdx=+\infty$ if $p>1$ and such that
 $G_{E_f}(t)\geq 1/({4\log (4 t)})$ 
 if 
 $t$ is big enough (Proposition \ref{funLog}). For such a function $f$, $G_{E_f}(t)\neq O(1/t^{p-1})$ for any $p>1$ and Corollary \ref{smooth:type} does not apply.

This example shows us that the estimate (B) in Theorem \ref{maintheorem} for $\rho_X$ is not sharp in general under renorming $X$.  However the inequality  (A) for $\delta_X$ allows us to find some non trivial estimate for the type of $X$ and to renorm the subspace $X$ in such a way to get a power type estimate 
for the modulus of smoothness of the new norm in terms of $C_X(t)$. In order to do this we can use the results of \cite{IPT,IT}  where it was shown that if $\delta_X(\epsilon)\geq k\epsilon^2$, then we can renorm $X$ to obtain that the new norm has modulus of smoothness of power type $p=p(k)>1$ and this estimate is asymptotically sharp when $k$ goes to 1/8 or 0 (remember that $\delta_X(\epsilon) \leq \delta_{L^2}(\epsilon)=(1/8) \epsilon^2 + o(\epsilon^2)$ \cite[II,pag. 63]{LZ}).

\section{Proof of the Main Theorem}

Let $(\Omega, \Sigma, \mu)$ be a probability measure space. 
The equivalent norm on $L^1(\mu)$ that verifies the thesis of Theorem \ref{maintheorem} will be the Luxemburg norm in the Orlicz space $L^1(\mu)$ for a suitable Orlicz function $M(t)$ (see e.g \cite{Kras,Rao} for references). 

\subsection*{The Orlicz function space}

Consider the function $\varphi(t)= 2$  if $0\leq t\leq 1$ and $\varphi(t)= 8/{(1+t)^2}$  if $t>1$, 
and let $M(t)$ be the function defined as a second primitive of $\varphi$ by the expression
$M(t)=\int_0^{|t|} \varphi(u)(t-u) \,du .$ 
It is clear that $M$ is an Orlicz function, i.e., an even, continuous, convex, increasing in $[0,+\infty)$ and $M(0)=0$. It verifies that 
$M(t)=t^2$ if $|t|\leq 1$, $M'(t)$ is concave in $[0,+\infty)$, $M''(t)=\varphi(t)$, and that 
$$\lim_{t\to+\infty}\frac{M(t)}{t}=\lim_{t\to+\infty}M'(t)=\int_0^{+\infty} \varphi(u)\, du<+\infty.$$ 
This identity shows (see e.g \cite[II.13.7]{Kras}) that  the Luxemburg norm associated to $M$ defined as
$\|f\|:= \inf \left\{ \lambda >0 : \int_{\Omega} M(f/\lambda)\, d\mu \leq 1  \right\}$  is equivalent to $\n_1$ in $L^1(\mu)$, and,   
because $M$ is normalized ($M(1)=1$), there is some constant $k>0$ such that 
\begin{equation}\label{eq:equiv-norms}
k \|.\|\leq 	\|.\|_1 \leq \|.\|
\end{equation}


Since $M'(t)$ is concave on $[0,+\infty)$ and $M'(0)=0$ we have   $\alpha M'(u)\geq M'(\alpha u)$ for $\alpha \geq 1$ and $u> 0$. Thus, for $t\geq 0$,
\begin{equation}\label{eq:alpha2}
\alpha^2 M(t) =\int_0^t \alpha^2 M'(u) du \ge \int_0^{t} \alpha M'(\alpha u) du = \int_0^{\alpha t} M'(s) ds = M(\alpha t).
\end{equation}
In particular we have  $4M(t)\geq M(2t)$ (that gives the $\Delta_2$ condition for $M$ \cite{Kras,Rao}) and 
$$3M(t)\geq M(2t)-M(t)=\int_t^{2t} M'(u) du \geq t M'(t) = t\int_0^t M''(u) du\geq t^2 M''(t).$$
Thus
\begin{equation}
 \label{lem:ineq1}
 M(t) \geq \frac{1}{3} t^2 M''(t), \text{ for all } t\geq 0.
\end{equation}

Using the inequalities  \eqref{eq:alpha2} and \eqref{lem:ineq1} we shall to prove the following lemma that is the key for proving our main theorem.

\begin{lemma} \label{lem:ineq2} The function $M$ satisfies the inequalities
$$\frac{1}{4} M''(c)(a-b)^2\leq M(a)+M(b) - 2 M \left(\frac{a+b}{2}\right) 
\leq 16\, M\left(\frac{a-b}{2}\right)$$
for all $a, b\in \mathbb{R}$, and $c=\max\{|a|,|b|\}$.
\end{lemma}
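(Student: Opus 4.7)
The plan is to base everything on a second-order integral representation of the symmetrized difference. I would set $m=(a+b)/2$ and $h=(a-b)/2$. By the evenness of $M$ (applied to $(-a,-b)$ if needed) and a possible swap of $a$ and $b$, one may assume $m\ge 0$ and $h\ge 0$, so that $c=\max\{|a|,|b|\}=m+h$. Writing the Taylor remainder in integral form for $M$ at $m$ and adding the two identities for $M(m\pm h)$ should yield
$$M(a)+M(b)-2M(m)=\int_{-h}^{h}(h-|s|)\,M''(m+s)\,ds,$$
which will drive both estimates.

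For the lower bound, I would observe that $|m+s|\le m+h=c$ for every $s\in[-h,h]$; since $\varphi=M''$ is even and nonincreasing on $[0,\infty)$, this gives $M''(m+s)\ge M''(c)$ throughout the interval, and $\int_{-h}^{h}(h-|s|)\,ds=h^{2}$ delivers exactly $\tfrac14 M''(c)(a-b)^{2}$.

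For the upper bound I would split on the relative size of $m$ and $h$. When $m\le h$, one has $|a|=m+h\le 2h$ and $|b|=h-m\le h$, so the $\Delta_2$ consequence $M(2h)\le 4M(h)$ of \eqref{eq:alpha2} at once gives
$$M(a)+M(b)-2M(m)\le M(2h)+M(h)\le 5M(h)\le 16M(h).$$
When instead $m>h$, then $m+s>0$ on $[-h,h]$, and the pointwise bound $\varphi(t)\le 8/(1+t)^{2}$ valid for every $t\ge 0$ (immediate for $t>1$, and on $[0,1]$ because $8/(1+t)^{2}\ge 2=\varphi(t)$) lets me replace $M''(m+s)$ by $8/(1+m+s)^{2}$. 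An explicit partial-fraction computation should give
$$\int_{-h}^{h}\frac{8(h-|s|)}{(1+m+s)^{2}}\,ds=8\ln\frac{(1+m)^{2}}{(1+m)^{2}-h^{2}},$$
and then $\ln(1+x)\le x$ together with $(1+m)^{2}-h^{2}\ge 1+2h$ (from $m\ge h$) will reduce the question to verifying $h^{2}/(1+2h)\le 2M(h)$. This last step is immediate for $h\le 1$ where $M(h)=h^{2}$, and for $h>1$ follows from $M(h)\ge h$, itself a consequence of $M'(t)=6-8/(1+t)\ge 2$ on $[1,\infty)$.

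The main obstacle I anticipate is carrying out the partial-fraction evaluation cleanly in the case $m>h$ and matching the resulting logarithm against $M(h)$; the remaining ingredients---the integral representation, the monotonicity of $\varphi$, and the $\Delta_2$-style estimates already in hand---are routine.
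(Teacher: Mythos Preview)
Your argument is correct. The integral Taylor identity you write is valid (since $M''=\varphi$ is bounded), the reduction to $m,h\ge 0$ is legitimate, and I checked your partial-fraction evaluation: indeed
\[
\int_{-h}^{h}\frac{h-|s|}{(1+m+s)^{2}}\,ds=\ln\frac{(1+m)^{2}}{(1+m)^{2}-h^{2}},
\]
after which $\ln(1+x)\le x$, the estimate $(1+m)^{2}-h^{2}\ge 1+2h$ (from $m\ge h$), and the bound $M(h)\ge h$ for $h\ge 1$ (via $M'(t)=6-8/(1+t)\ge 2$ there) close the case $m>h$ as you describe. The case $m\le h$ is immediate from $M(2h)\le 4M(h)$.

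Your lower bound coincides with the paper's: both use Taylor at the midpoint together with the fact that $M''$ is even and nonincreasing on $[0,\infty)$; the only cosmetic difference is that the paper invokes the Lagrange remainder while you use the integral one. For the upper bound, however, the routes genuinely diverge. The paper never uses the explicit form of $\varphi$; it runs a three-case analysis on the signs and relative sizes of $a,b$ and appeals only to the abstract consequences \eqref{eq:alpha2} and \eqref{lem:ineq1} of the concavity of $M'$, ultimately bounding the middle term by $4(a-b)^{2}M(c)/c^{2}$ and then by $16\,M(\tfrac{a-b}{2})$ via \eqref{eq:alpha2}. Your approach instead exploits the pointwise inequality $\varphi(t)\le 8/(1+t)^{2}$ and evaluates the resulting integral in closed form. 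The trade-off: the paper's argument would transfer verbatim to any Orlicz function with concave derivative, whereas yours is tied to this specific $M$ but is entirely self-contained and avoids the somewhat ad hoc case split on $a$ versus $b/2$.
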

\begin{proof}
Taylor's formula for $M$ gives
$$M(a) +M(b)- 2 M\left(\frac{a+b}{2}\right) 
 = \frac{(a-b)^2}{4} M''\left(\frac{a+b}{2}+\theta\frac{a-b}{2}\right) \label{taylor2g}$$
with $\theta \in(-1,+1)$.
As $M$ is an even function and $M''$ is decreasing in $[0,+\infty)$, it is easy to get that $M''(\tfrac{a+b}{2}+\theta\tfrac{a-b}{2})\geq M''(c)$, which provides the left inequality.

For the right inequality we assume that $0\leq|a|\leq|b|$, $a\neq b$, and consider separately three cases:

\noindent (a)  $0< \frac{b}{2}\leq a\leq b$ $(c=b)$. Put $d=\frac{a+b}{2}+\theta 	\frac{a-b}{2}	\in[\frac{b}{2},b]$. By inequality  \eqref{lem:ineq1} we get
$$M''(d)\leq 3 \frac{M(d)}{d^2}\leq 3 \frac{M(b)}{\frac{1}{4}b^2}=12 \frac{M(c)}{c^2}.$$
Going back to Taylor's formula from above we obtain
$$M(a) +M(b)- 2 M\left(\frac{a+b}{2}\right) \leq 3 (a-b)^2 \frac{M(c)	}{c^2}.$$

\noindent (b) $0\leq a< \frac{b}{2}$ $(c=b)$. In this case $|a-b|\geq \frac{b}{2}$,  and we have $$M(a) +M(b)- 2 M\left(\frac{a+b}{2}\right) \leq 
M(b)=4  \left(\frac{b}{2}\right)^2\frac{M(c)}{c^2}\leq 4 (a-b)^2\frac{M(c)}{c^2}.$$

\noindent (c) $0\leq -a \leq b$ or $0\leq a \leq -b$ $(c=b)$. In both cases $|a-b|\geq |b|$ and
$$M(a) +M(b)- 2 M\left(\frac{a+b}{2}\right) \leq 
2 M(b)=2  b^2\frac{M(c)}{c^2}\leq 2 (a-b)^2\frac{M(c)}{c^2}.$$

To complete the chain of inequalities we use  \eqref{eq:alpha2} with $\alpha= \frac{2 c}{|a-b|}>1$ and we get
$$ 4(a-b)^2\frac{M(c)}{c^2} =16 \, \alpha^{-2}M\left(\alpha\, \frac{a-b}{2}\right)\leq 16\,  M\left(\frac{a-b}{2}\right).$$
\end{proof}

The upper bound we propose for 
$M(a)+M(b)-2M(\frac{a+b}{2})$ can be deduced from
\cite[Lemma 4]{MaTr78}, in fact we only improve the constant. We note 
that in \cite{MaTr78} an estimate  similar to our estimate from below  is obtained assuming  $ tM'(t)/M(t)\geq p>1$ for $t>0$. Clearly for our function  $\lim_{t\to+\infty}tM'(t)/M(t)=1$  and 
the result from \cite{MaTr78} is not appliable. For this reason we involve the second derivative of $M$. 

\subsection*{(A) Estimate for the modulus of convexity.}

To get the estimation (A) of Theorem \ref{maintheorem} we prove the following more general statement relative to the Luxemburg norm for our Orlicz function. 

\begin{lemma}\label{prop:generaluc} For every $u,v \in L^1(\mu)$, $\|u\pm v\|=1$, and every $t>0$ the inequality holds
\begin{equation}\label{eq:generaluc}
1-\|u\| \geq \left(\frac{k}{9}\right)^2 t^2 \mu(\{|v|> t\|v\|_1 \})^3 \|v\|^2,
\end{equation}
where $k$ is the isomorphic constant from \eqref{eq:equiv-norms}.
\end{lemma}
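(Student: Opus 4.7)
The plan is to combine Lemma \ref{lem:ineq2} pointwise with a Chebyshev-type truncation on the Luxemburg level sets of $u+v$ and $u-v$. Substituting $a=u(\omega)+v(\omega)$ and $b=u(\omega)-v(\omega)$ (so that $(a+b)/2=u$, $(a-b)/2=v$, and $c(\omega)=\max\{|u+v|,|u-v|\}$) into the left inequality of Lemma \ref{lem:ineq2} gives the pointwise bound
$$v^2\, M''(c)\leq M(u+v)+M(u-v)-2M(u).$$
Because $M\in\Delta_2$, the Luxemburg unit sphere coincides with the level set $\int M(\cdot)\,d\mu=1$, so $\int M(u\pm v)\,d\mu=1$. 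Combining this with inequality \eqref{eq:alpha2} applied with $\alpha=1/\|u\|\geq 1$ (which yields $M(u)\geq \|u\|^2 M(u/\|u\|)$ pointwise, hence $\int M(u)\,d\mu\geq \|u\|^2$), and with $\|u\|\leq \tfrac12\|u+v\|+\tfrac12\|u-v\|=1$, integration of the pointwise bound produces the global estimate
$$\int v^2 M''(c)\,d\mu\leq 2(1-\|u\|^2)\leq 4(1-\|u\|).$$

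To generate the cubic dependence on $\mu(A_t)$ with $A_t:=\{|v|>t\|v\|_1\}$, I localise the left-hand integral to $A_t\cap B_s$, where $B_s:=\{|u+v|\leq s\}\cap\{|u-v|\leq s\}$; on $B_s$ one has $c\leq s$, and since $M''$ is nonincreasing on $[0,\infty)$, $M''(c)\geq M''(s)$ there. The Chebyshev inequality applied to the Luxemburg-unit functions $u\pm v$ gives $\mu(\{|u\pm v|>s\})\leq 1/M(s)$, hence $\mu(A_t\cap B_s)\geq \mu(A_t)-2/M(s)$. Choosing $s$ by $M(s)=4/\mu(A_t)$ (possible since $M$ is a continuous bijection of $[0,\infty)$ onto itself) forces $\mu(A_t\cap B_s)\geq \mu(A_t)/2$, and therefore
$$\int v^2 M''(c)\,d\mu\geq M''(s)\,t^2\|v\|_1^2\,\frac{\mu(A_t)}{2}.$$
A lower bound for $M''(s)$ at such an $s$ can be read off from $M''=\varphi$: from $M(t)\geq 2t-1$ for $t\geq 1$ the equation $M(s)=4/\mu(A_t)$ forces $s\leq 3/\mu(A_t)$, whence $M''(s)=8/(1+s)^2\geq c_1\mu(A_t)^2$; the same order of magnitude also drops out of \eqref{lem:ineq1} coupled with the boundedness of $M'$.

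Assembling these three estimates and using the norm equivalence $\|v\|_1\geq k\|v\|$ from \eqref{eq:equiv-norms} delivers an inequality of the shape
$$1-\|u\|\geq c_0\, k^2\, t^2\,\mu(A_t)^3\,\|v\|^2,$$
so it remains only to bookkeep absolute constants to match the stated $(k/9)^2$. I expect this constant tracking to be the main technical obstacle: small factors compound multiplicatively through three stages (the passage $\int M(u)\,d\mu\geq \|u\|^2$, the Chebyshev truncation $\mu(A_t\cap B_s)\geq \tfrac12\mu(A_t)$, and the lower bound for $M''(s)$), so the truncation level $s$ probably has to be chosen more carefully than the naive $M(s)=4/\mu(A_t)$ in order to balance the losses. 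All other ingredients---the pointwise Lemma \ref{lem:ineq2}, the $\Delta_2$ identification of the Luxemburg unit sphere, and the Chebyshev inequality for $M(|f|)$---are standard once the tailored Orlicz function of the paper is in hand.
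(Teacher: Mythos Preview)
Your proof is correct and follows essentially the paper's route: pointwise application of the left inequality in Lemma~\ref{lem:ineq2} with $a=u+v$, $b=u-v$; integration using the $\Delta_2$ identification $\int M(u\pm v)\,d\mu=1$ together with $\int M(u)\,d\mu\geq\|u\|^2$ from \eqref{eq:alpha2}; a Chebyshev-type truncation to get a set of measure $\geq \tfrac12\mu(A_t)$ on which $c$ is controlled; and finally the explicit formula $M''(s)=8/(1+s)^2$. The only tactical difference is that the paper applies the $L^1$-Chebyshev bound separately to $u$ and to $z=v/\|v\|$ with the fixed level $s=4/\mu(A_t)$ (yielding $c\leq 2s$ on the good set), whereas you apply the Orlicz--Markov inequality $\mu(\{|u\pm v|>s\})\leq 1/M(s)$ directly to $u\pm v$ with $M(s)=4/\mu(A_t)$; your variant is a touch cleaner and, once one tracks the numbers (your $s\leq \tfrac{5}{2\mu(A_t)}$ gives $M''(s)\geq \tfrac{32}{49}\mu(A_t)^2$), it already produces $1-\|u\|\geq \tfrac{4}{49}k^2 t^2\mu(A_t)^3\|v\|^2$, which is stronger than the stated $(k/9)^2$ bound---so the concern in your last paragraph about having to retune $s$ is unnecessary.
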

\begin{proof}
Without loss of generality we may assume that 
$\mu(\{|v|> t\|v\|_1\})>0$.

Recall that $k\|\phi\|\leq \|\phi\|_1\leq \|\phi\|$ for every $\phi$ in $L^1(\mu)$.

Set $z=v/\|v\|$, then we have
$$\mu(\{|z|> k t\})\geq \mu(\{|z|>t\|z\|_1\})=\mu(\{|v| > t\|v\|_1\})>0.$$
On the other hand the Tchebychev  inequality   for  $s= 4/\mu(\{|v| > t\|v\|_1\})$  and $\phi$ in $L^1(\mu)$ gives
$$\mu(\{ s \|\phi\| < |\phi|\}) \leq \mu(\{ s\|\phi\|_1 < |\phi|\})\leq \frac{1}{s}= \frac{\mu(\{|v| > t\|v\|_1\})}{4}.$$
Now, we can use the last inequalities for our function $z$  to get the estimate
\begin{align}
\mu(\{ s \geq |z| > k t\})&=\mu(\{ |z| > k t\})- \mu(\{ s < |z|\}) \notag \\ &\geq \mu(\{|v| > t\|v\|_1\}) - \frac{1}{s} = \frac{3}{4} \mu(\{|v| > t\|v\|_1\}).\label{musubindex}
\end{align}

Note that $\|u\|\leq 1$, and $\|v\|\leq 1$. Then for each  $\omega\in \Omega$ such that $|u(\omega)|\leq s\|u\|\leq s$  and $s\geq |z(\omega)|>k t$, we have $|v|(\omega)\leq s$ and therefore $|u(\omega)\pm v(\omega)|\leq 2s$. As $M''$ is decreasing in $[0,+\infty)$ by applying Lemma \ref{lem:ineq2} with $a=u(\omega)+v(\omega)$ and $b=u(\omega)-v(\omega)$ we get
\begin{equation}
M(u(\omega)+v(\omega)) +M(u(\omega)-v(\omega))- 2 M(u(\omega)) \geq  \|v\|^2 z^2(\omega) M''(2s) \label{taylor2-d}.
\end{equation}
	The $\Delta_2$ condition for $M$ gives that $ \int_\Omega M(f) \, d\mu=1 $,  if $\|f\|=1$ \cite[III.3.4.6]{Rao}. 
	By integrating \eqref{taylor2-d} over $\Omega$ we obtain
\begin{align}
2&\left(1 -\int_{\Omega} M(u)\, d\mu\right) \notag \\
&= \int_{\Omega}M(u+v)\, d\mu+\int_{\Omega} M(u-v)\, d\mu-2\int_{\Omega} M(u)\, d\mu \notag \\
&\geq  \|v\|^2 M''(2s) \int_{\{ s\geq |z|>kt\}\setminus \{ |u|>s\|u\| \}} z^2 \, d\mu \notag \\
&\geq \|v\|^2  M''(2s) k^2t^2 \left(\mu(\{ s\geq |z|>kt\})-\mu(\{ |u|>s\|v\| \})\right) \notag \\
&>\frac{1}{2} \|v\|^2 M''(2s) k^2 t^2 \mu(\{|v| > t\|v\|_1\}) = 4 \|v\|^2   	\frac{1}{(1+2s)^2}k^2 t^2 \mu(\{|v| > t\|v\|_1\}) \notag \\
&\geq  \|v\|^2 \left(\frac{2 k}{9}\right)^2 t^2 \mu(\{|v| > t\|v\|_1\})^3.\label{modconv0}
\end{align}

On the other hand, having in mind the inequality  \eqref{eq:alpha2} with $\alpha=\frac{1}{\|u\|}$ we get
$$\frac{1}{\|u\|^2}\int_{\Omega} M(u(\omega))\, d\mu(\omega) \geq \int_{\Omega} M\left(\frac{u(\omega)}{\|u\|}\right)\, d\mu(\omega)=1,$$
and using this inequality in \eqref{modconv0} we obtain the inequality we are looking for:
\begin{align}
 1-\left\|u\right\| &\geq \frac{1}{2} \left(1-\left\|u\right\|^2 \right)
 \geq \frac{1}{2}\left(1- \int_{\Omega} M(u(\omega))\, d\mu(\omega)\right) \notag \\ &
 \geq \left({k}/{9}\right)^2 t^2 \mu(\{|v| > t\|v\|_1\})^3 \|v\|^2. \notag
\end{align}
\end{proof}

Pick $f,g\in X$, $\|f\|=\|g\|=1$ with $\|f-g\|=\epsilon$. Setting  $u=(f+g)/2$ and $v=(f-g)/2$ in  Lemma \ref{prop:generaluc}, $K_1=({k}/{18})^2$, $K_X= K_1 \sup\{ t^2 C_X^3(t), 0<t<1 \}$ and  according to the  definition of the modulus of convexity, we get 
\begin{equation} \tag{A}
\delta_X(\epsilon)\geq K_X \epsilon^2,   \text{ for } 0<\epsilon\leq 2,
\end{equation}
as we wanted to show.
\begin{remark}
Lemma \ref{prop:generaluc} implies that $L^1(\mu)$ with the new Orlicz norm is {\itshape uniformly rotund in every direction}  which was proved in  \cite{DKT} essentially using a strictly convex Orlicz function  that satisfies the $\Delta_2$ condition. On the other hand, the derivative of our function $M$ is concave and  belongs to the class of functions considered in \cite{Maleev}. So, according to the main result of this work, that norm is uniformly G\^ateaux smooth. 
\end{remark}

\subsection*{(B) Estimate for the modulus of smoothness. }

The proof of (B)  is based on the right hand side  inequality in Lemma \ref{lem:ineq2}. 
In order to simplify the computations we need the following
\begin{prop}[Figiel \cite{Figiel76}] For every Banach space $X$ and $\tau>0$ we have
\begin{equation}\label{eq:modFigiel} \rho_X(\tau)\leq 16  \sup
\left\{\tfrac{\|x+\tau y\|+
\|x-\tau y\|}{2}, x,y \in S_X ,  x \perp y \right\} 
\end{equation}
where $x\perp y $ means that there is some $x^*\in S_X$ such that $x^*(x)=1$ and $x^*(y)=0$. 
\end{prop}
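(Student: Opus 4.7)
Proof plan. The statement asserts that the modulus of smoothness $\rho_X(\tau)$, defined via the supremum over arbitrary unit pairs, is controlled up to the universal constant $16$ by its restriction to Birkhoff--James orthogonal pairs. (The right-hand side should implicitly subtract $1$ to match the definition of $\rho_X$, and the supremum is understood to be over $x,y\in S_X$.) My approach is, given an arbitrary $(x,y)\in S_X\times S_X$, to use Hahn--Banach to decompose $y$ into a component along $x$ and a remainder $z$ with $x\perp z$, and then show that the cross term involving the $x$-component contributes only at second order, via a sign cancellation between $\|x+\tau y\|$ and $\|x-\tau y\|$.

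Concretely, by Hahn--Banach choose $f\in S_{X^*}$ with $f(x)=1$, and set $\lambda:=f(y)\in[-1,1]$ and $z:=y-\lambda x$. Then $f(z)=0$, hence $x\perp z$ in the paper's sense, and $\|z\|\le\|y\|+|\lambda|\,\|x\|\le 2$. Writing $x\pm\tau y=(1\pm\tau\lambda)x\pm\tau z$ and using that $f$ is norming, we obtain $\|x\pm\tau y\|\ge 1\pm\tau\lambda$ and $\|x\pm\tau z\|\ge f(x\pm\tau z)=1$. The analytic core is then to prove
\[
\tfrac{\|x+\tau y\|+\|x-\tau y\|}{2}-1\;\le\;C_1\Bigl(\tfrac{\|x+\tau z\|+\|x-\tau z\|}{2}-1\Bigr)
\]
for an absolute constant $C_1$, using the convexity of $\|\cdot\|$ and the positive-homogeneity identity $\|(1+\mu)x+\tau z\|=(1+\mu)\,\|x+\tfrac{\tau}{1+\mu}z\|$ valid for $\mu>-1$. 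Setting $\tilde y:=z/\|z\|\in S_X$ (the case $z=0$ gives $y=\pm x$ and the left-hand side is $0$) yields an orthogonal pair $(x,\tilde y)$, and the right-hand side of the display equals $\tfrac{\|x+\tau\alpha\tilde y\|+\|x-\tau\alpha\tilde y\|}{2}-1$ with $\alpha:=\|z\|\in(0,2]$; a further comparison exploiting the convexity and vanishing-at-zero of $s\mapsto\tfrac{\|x+s\tilde y\|+\|x-s\tilde y\|}{2}-1$ on $s\in(0,2\tau]$ bounds this by a further absolute factor $C_2$ times the same quantity at parameter $\tau$. Arranging $C_1C_2=16$ and taking the supremum over $(x,y)\in S_X\times S_X$ gives the claim.

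The main obstacle is securing the constant $C_1$. The naive triangle-inequality estimate $\|(1\pm\tau\lambda)x\pm\tau z\|\le\|x\pm\tau z\|+\tau|\lambda|$ yields only an additive $O(\tau)$ error, which is fatal because for uniformly smooth $X$ the right-hand side is $o(\tau)$. The correct argument must exploit the sign cancellation between the two summands: the term $\tau\lambda x$ enters $\|x+\tau y\|$ with one sign and $\|x-\tau y\|$ with the opposite, so after symmetric averaging only a second-order ``diagonal'' contribution survives, absorbable by the orthogonal quantity on the right. Making this cancellation quantitative through a careful convexity argument applied to the symmetric sum is the geometric heart of Figiel's original argument. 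A secondary, technical obstacle is the rescaling from parameter $\tau\alpha\le 2\tau$ back to $\tau$, where one must use that the orthogonal modulus is itself realized by nontrivial orthogonal pairs at each scale; this is where the universality of the factor $16$ (rather than a dimension- or space-dependent constant) is finally pinned down.
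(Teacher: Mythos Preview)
The paper does not supply its own proof of this proposition: it is stated with attribution and citation to Figiel \cite{Figiel76} and then used as a black box in the proof of part (B). So there is nothing in the paper to compare your argument against beyond the bare reference.

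That said, your plan follows the correct architecture of Figiel's original argument: the Hahn--Banach decomposition $y=\lambda x+z$ with $x\perp z$, the observation that $\|z\|\le 2$, and the recognition that the $\pm\tau\lambda x$ terms must cancel after symmetric averaging are all the right ingredients. You also correctly flag the typo in the displayed inequality (the missing $-1$).

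However, your proposal is explicitly a \emph{plan} rather than a proof: you identify the ``analytic core'' inequality and its ``main obstacle'' (obtaining the multiplicative constant $C_1$ via a quantitative cancellation argument) but do not actually carry it out. The phrase ``making this cancellation quantitative through a careful convexity argument\ldots is the geometric heart of Figiel's original argument'' is a description of what remains to be done, not a proof. Likewise, the rescaling step from parameter $\tau\alpha$ back to $\tau$ is asserted to cost only an absolute factor $C_2$, but you do not exhibit it; the convexity of $s\mapsto\tfrac{\|x+s\tilde y\|+\|x-s\tilde y\|}{2}-1$ together with its vanishing at $0$ gives $\phi(2\tau)\le 2\phi'(2\tau)\cdot \tau$ type bounds only with additional work. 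If your goal is to \emph{reprove} Figiel's result rather than cite it, these two steps must be written out in full; as it stands your text is a correct outline but not a complete argument.
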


\begin{proof}[Proof of (B) in Theorem \ref{maintheorem}]

Let $X$ be  a subspace of $L^1(\mu)$ and  $\|.\|$ be  the Luxemburg norm associated to our Orlicz function $M$.
Since $M(\alpha t)\leq \alpha M(t)$ for every $\alpha\in [0,1]$ and $t>0$, 
for $f,g\in X $, $\|f\|=\|g\|=1$,  $f\perp g$, 
we have
$$1\leq \|f\pm\tau g\| \leq \int_\Omega M(f\pm\tau g) \, d\mu.$$
On the other hand, the right inequality in Lemma \ref{lem:ineq2} for $a=f(\omega)+\tau g(\omega)$ and $b=f(\omega)-\tau g(\omega)$ reads
$$M(f(\omega)+\tau g(\omega))+M(f(\omega)-\tau g(\omega))- 2 M(f(\omega))\leq 16 M(\tau g(\omega)),$$
and integrating over $\Omega$  we get
\begin{align}
\|f+\tau g\| +\|f-\tau g\| -2 &\leq \int_\Omega M(f+\tau g) \, d\mu +\int_\Omega  M(f-\tau g) \, d\mu - 2\int_\Omega  M(f) \, d\mu\notag \\
                                        &\leq 16 \int_\Omega  M(\tau g) \, d\mu . \label{eq:totheremark}
\end{align}
This, together with  inequality \eqref{eq:modFigiel} gives 
$$\rho_X(\tau)\leq 128 \sup\left\{ \int_\Omega M(\tau g) \, d\mu: \|g\|=1 , g\in X \right\}.$$

Having in mind \eqref{eq:equiv-norms}, and considering $g_1=g/\|g\|_1$ we have $$M(\tau g)\leq \|g\|_1M(\tau g_1)\leq M(\tau g_1).$$ Thus, we have also
$$
\rho_X(\tau)\leq 128 \sup\left\{ \int_\Omega M(\tau g) \, d\mu: \|g\|_1=1 , g\in X \right\}.
$$

Our Orlicz function $M(x)$ is equal to $x^2$ for $|x|\leq 1$ and equivalent to $x$ when $x\to+\infty$. Then, it is easy to find a constant $C>1$ such that  $M(x)\leq C |x|$ for $|x|\geq 1$. Thus for $K=128C>0$ we have
\begin{equation}\label{eq:us1}
\rho_X(\tau)\leq K \sup_{g\in X, \|g\|_1=1}\left\{ \tau^2 \int_{\{ |g|\leq \frac{1}{\tau}\}}|g|^2\, d\mu+\tau\int_{\{ |g|>\frac{1}{\tau}\}} |g| \, d\mu \right\}. 
\end{equation}

In order to put the sum in the brackets in the last inequality as in a single integral, we are going to express the two integrals as Riemann-Stieltjes integrals  
and after an integration by parts we will write them as double integrals.
\begin{align}
I_1&:=\tau^2\int_{\{ |g|\leq \frac{1}{\tau}\}}|g|^2\, d\mu=-\tau^2\int_{0}^{1/\tau}x^2dF_g(x) \notag \\
    &=-F_g(1/\tau)+2\tau^2\int_0^{1/\tau}xF_g(x)dx 
    =-F_g(1/\tau)+2\tau^2\int_0^{1/\tau}F_g(x)\int_0^x  dt \, dx\notag \\
   &=-F_g(1/\tau)+2\tau^2\int_0^{1/\tau}\int_{t}^{1/\tau} F_g(x) dx \,dt.\notag\\
I_2&:=\tau\int_{\{ |g|> \frac{1}{\tau}\}}|g|\, d\mu=-\tau\int_{1/\tau}^{+\infty}xdF_g(x)  	
	 =F_g(1/\tau)+\tau\int_{1/\tau}^{+\infty}F_g(x)dx \notag \\
     &=F_g(1/\tau)+\tau^2\int_0^{1/\tau}\int_{1/\tau}^{+\infty}F_g(x)dx \, dt
     \notag 
\end{align}

$$I_1+I_2 \leq 2\tau^2\int_0^{1/\tau}\int_{t}^{+\infty}F_g(x)dx \, dt. 
$$

This inequality and \eqref{eq:us1} imply (B) with $K_2=2 K$
\begin{equation} \tag{B}
\rho_X(\tau)\leq K_2 \tau^2\sup_{g\in X, \|g\|_1=1}\left\{\int_0^{1/\tau}\int_t^{+\infty} F_g(x) dx\right\}\leq K_2 \tau^2  \int_0^{1/\tau}G_X(t) dt.
\end{equation}
\end{proof}

\begin{remark}\label{remark1}
For every $1<p\leq 2$ there is some contant $c_p>0$ such that 
$M(u)\leq c_p |u|^p$ for all $u$. When $X$ is in $L^p(\mu)\subset L^1(\mu)$,  and the norms $\n_p$ and $\n_1$ are equivalent on $X$ ($\|g\|_p\leq C\|g\|$ for $g\in X$),  in \eqref{eq:totheremark} we have 
$$\|f+\tau g\| +\|f-\tau g\| -2 \leq \textstyle 16 \int_\Omega  M(\tau g)d\mu\leq 16c_p \tau^p \|g\|_p^p \leq  16c_p C^p \,\tau^p $$
for $f$ and $g\in X$, $\|f\|=\|g\|=1$, and $f\perp g$. Thus, the subspace $X$, endowed with the  norm from Theorem \ref{maintheorem} has modulus of convexity of power type  2 and modulus of smoothness of power type $p$.
\end{remark}

\section[Copies of $\ell_2$]{Some reflexive subspaces of $L^1([0,1])$ that are copies of $\ell_2$.}

\subsection*{Proof of Proposition \ref{subespacioR}.}

Let us introduce some notation before  proceeding with the proof.
Consider the dyadic tree $T=\{0,1\}^{[\mathbb{N}]}=\bigcup_{n=1}^{+\infty}\{0,1\}^n$ 
with the order defined by  $s\leq v$ if  $s=\{s_j\}_{1\leq j\leq n_s}$, $d=\{d_j\}_{1\leq j\leq n_d}$, $n_s\leq n_d$, and $s_j=d_j$ for $j\leq n_s$, i.e. $s$ 
 is a predecessor of 
$v$. 

We describe all the dyadic intervals indexed in $T$, $\{I_s^n: s\in \{0,1\}^n, n\in \mathbb{N} \}$. We   begin  with 
$I_{\{0\}}^1=[0,1/2) \hspace{10pt}\text{and}\hspace{10pt}I_{\{1\}}^1=[1/2,1),$
and for a  given  $s\in \{0,1\}^n$ if 
$I_{s}^{n}=[j(s)/2^n,(j(s)+1)/2^n)$ for some  $j(s)\in\{0,1,...,2^n -1\}$,  
we define 
$$I_{\{s,0\}}^{n+1}= [j(s)/2^{n},(2j(s)+1)/2^{n+1}),\text{ and  }$$ $$I_{\{s,1\}}^{n+1}=[(2j(s)+1)/2^{n+1},(j(s)+1)/2^{n}).$$

Observe that 
the Rademacher function $r_n$ is constant on each $I_s^n$ and takes the value $1$ or $-1$ depending 
on whether the last digit  $s_n$ in $s=\{s_1,...,s_n\}$,  is $0$ or $1$.

 We denote by $D_m$ ($m\in\mathbb{N}$) the family of all sets $A\subset [0,1]$ which are finite unions of dyadic intervals $I_s^m$ of length $1/2^m$. Let us also denote by $\mu$ the Lebesgue measure in $[0,1]$.

\begin{lemma}\label{aux:subsepacesR}
Let $g\in L^1([0,1])$, $g\geq 0$, $A\in D_m$ and $ \lambda \in (0,1)$. Then there exists an integer $n_\lambda >m$ such that for every $n\geq n_\lambda$ there are sets $A_0$, and $A_1\in D_n$ verifying 
\begin{enumerate}
\item $\mu (A_0)=\mu(A_1)=\frac 12 \mu(A)$;
\item $\frac{\lambda}{2} \int_A g \, d\mu \leq \int_{A_i} g \, d\mu\leq \frac{1}{2\lambda} \int_A g \, d\mu$, $i=0,1$;
\item  $r_{n| A_i}(t)=(-1)^i$,  $i=0,1$.
\end{enumerate}
\end{lemma}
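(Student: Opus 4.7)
The plan is to take $A_0$ and $A_1$ to be the natural restrictions of $A$ to the level sets of $r_n$, namely
$$A_0=A\cap\{r_n=1\}\quad\text{and}\quad A_1=A\cap\{r_n=-1\}.$$
Because $A\in D_m$ is a finite union of dyadic intervals of length $1/2^m$ and $\{r_n=\pm 1\}$ is a union of dyadic intervals of length $1/2^n$, for every $n>m$ both $A_0$ and $A_1$ belong to $D_n$. On each dyadic interval $I_s^m\subset A$ the function $r_n$ equals $+1$ on exactly half of its length and $-1$ on the other half, so (1) is immediate. Condition (3) holds by construction, so the substance of the lemma is (2).

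For (2) I will show the pointwise convergence
$$\lim_{n\to\infty}\int_{I_s^m\cap\{r_n=\pm 1\}}g\,d\mu=\frac{1}{2}\int_{I_s^m}g\,d\mu$$
for each individual dyadic interval $I_s^m\subset A$. Since $A$ is a union of finitely many such $I_s^m$, summing gives $\int_{A_i}g\,d\mu\to\frac12\int_A g\,d\mu$ for $i=0,1$. If $\int_A g\,d\mu=0$ then $g=0$ a.e.\ on $A$ and (2) is trivial; otherwise the ratio $2\int_{A_i}g\,d\mu/\int_A g\,d\mu$ tends to $1$, so one can choose $n_\lambda>m$ large enough that this ratio lies in $[\lambda,1/\lambda]$ simultaneously for $i=0,1$ and for every $n\geq n_\lambda$, which is precisely (2).

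The key convergence I would establish by the density of dyadic step functions in $L^1$. Given $\epsilon>0$, approximate $g$ in $L^1$ by a function $h=\sum_j c_j\chi_{J_j}$, where the $J_j$ are pairwise disjoint dyadic intervals of a common length $1/2^N$ (for some $N\geq m$), with $\|g-h\|_1<\epsilon$. For any $n>N$, each $J_j$ contained in $I_s^m$ is split by $r_n$ into equal-measure positive and negative halves, hence
$$\int_{I_s^m\cap\{r_n=\pm 1\}}h\,d\mu=\tfrac12\int_{I_s^m}h\,d\mu.$$
Combining this exact identity for $h$ with the estimate $\|g-h\|_1<\epsilon$ on the error terms yields
$$\left|\int_{I_s^m\cap\{r_n=\pm 1\}}g\,d\mu-\tfrac12\int_{I_s^m}g\,d\mu\right|<\tfrac{3}{2}\epsilon,$$
and letting $\epsilon\to 0$ gives the claimed limit.

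The only mildly delicate point is uniformity: one needs a single threshold $n_\lambda$ that works for every interval $I_s^m\subset A$ and both signs simultaneously. Since $A$ consists of finitely many dyadic intervals of length $1/2^m$, this is handled simply by taking the maximum of the finitely many thresholds produced by the previous step. No harder estimate is required; the weight $g$ enters only through its integrals on the finitely many pieces $I_s^m$, and the Rademacher equidistribution does the rest.
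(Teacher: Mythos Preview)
Your proof is correct and follows essentially the same route as the paper: define $A_0,A_1$ as the intersections of $A$ with the level sets of $r_n$, approximate $g$ by a dyadic step function, and use that $r_n$ bisects each dyadic interval of finer level. The paper normalizes $\int_A g\,d\mu=1$, picks $\delta=(1-\lambda)/4$, and works directly on $A$ rather than interval-by-interval, which makes the final ``uniformity'' paragraph of your argument unnecessary; otherwise the two proofs are the same.
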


\begin{proof}
It is not restrictive to assume that $\int_A g\,d\mu=1$. Now 
 we can find an integer $k>m$, a subset $S\subset\{0,1\}^k $, and a simple function $$ \varphi=\sum_{s\in S} a_s\chi_{I_s^k}$$ such that 
$A=\bigcup_{s\in S} I_s^k$ and 
\begin{equation}\label{eq:aproxsimple}\int_A |g-\varphi| \, d\mu < \delta=\frac{1-\lambda}{4}.\end{equation}  
Let $n_\lambda=k+1$,  and $n\geq n_\lambda$. We define  $A_0$ as the union of  dyadic intervals $I_d^n\subset A$ with $d=\{d_1,...,d_n\}$ and $d_n=0$,  and $A_1=A\setminus A_0$. Then,  $A=A_0\bigcup A_1$, $A_i\in D_n$  and $r_{n| A_i}(t)=(-1)^i$ for $i=0,1$. Having in mind that  for each $I_d^n\subset A$ the predecessor $s=\{d_1,...,d_k\}$ of $d$  is in $S$ and   $\varphi$ is constant on $I_d^n$, one can  deduce that
  $\int_{A_i}\varphi \, d\mu=\frac{1}{2}\int_A \varphi \, d\mu$. To finish the proof, we use \eqref{eq:aproxsimple} to get
$$\frac{\lambda}{2}=\frac{1}{2}-2\delta\leq\int_{A_i} \varphi\, d\mu-\delta\leq\int_{A_i} g\, d\mu \leq\int_{A_i} \varphi\, d\mu+\delta\leq \frac{1}{2}+2\delta\leq \frac{1}{2\lambda}.
$$
 \end{proof}

 We get the proof of Proposition \ref{subespacioR} iterating this last Lemma. Let $\eta\in (0,1)$ and choose a sequence $\lambda_k\in (0,1)$ such that $\prod_{k=1}^{+\infty}\lambda_k=\eta$.

We start by applying  Lemma \ref{aux:subsepacesR} for $\lambda_1\in(0,1)$ to the function $f$ and the set $A=[0,1]$ and we get $n_1$, $A_{\{0\}}^1, A_{\{1\}}^1\in D_{n_1}$ such that $$\lambda_1 \frac{1}{2}\leq \int_{A_{\{i\}}^1}f\, d\mu\leq \frac{1}{\lambda_1} \frac{1}{2},$$ and $r_{n_1|A_{\{i\}}^1}=(-1)^{i}$, $i=0,1$. 

Assume we have chosen $n_1<n_2<...<n_k$ and $A_{s}^p\in D_{n_p}$ with $s\in\{0,1\}^{p}$, $1\leq p\leq k$, in such a way that for $1<p\leq k$, $A_{\{s,0\}}^p$ and $A_{\{s,1\}}^p$ is the partition of $A_s^{p-1}$ that verifies the statement of Lemma \ref{aux:subsepacesR} for this set, the function $f$, $n=n_p$ and $\lambda=\lambda_p$.

To continue the construction by induction, for each $s\in\{0,1\}^k$ we apply   Lemma  \ref{aux:subsepacesR}        to the set $A_s^k$ , the function $f$, and the number $\lambda_{k+1}$, and we find a number $n(s,\lambda_{k+1})>n_k$ that satisfies its statement. We choose $n_{k+1}=\max\{n(s,\lambda_{k+1}): s\in\{0,1\}^k\}$, and we consider for each $s$, the partitions provided by the lemma  to obtain $A_{\{s,0\}}^{k+1}$, and $A_{\{s,1\}}^{k+1}\in D_{n_{k+1}}$
such that $A_s^k=A_{\{s,0\}}^{k+1}\bigcup A_{\{s,1\}}^{k+1}$,
$$\lambda_{k+1} \frac{1}{2} \, \int_{A_s^k}f\,d\mu\leq \int_{A_{\{s,i\}}^{k+1}}f\,d\mu\leq 
\frac{1}{\lambda_{k+1}} \frac{1}{2} \, \int_{A_s^k} f\, d\mu,$$
and $r_{n_{k+1}|A_{\{s,i\}}^{k+1}}=(-1)^i$, $i=0,1$. 

Under this construction, for any finite sequence of scalars $a_1,...,a_k$, the function
$\sum_{j=1}^k a_j r_j$ is constant on each $I_s^k$ and takes the same value as $\sum_{j=1}^k a_j r_{n_j}$ on $A_s^k$.

On the other hand,  the induction gives the inequalities:
$$\eta \mu(I_s^k)\leq \prod_{p=1}^{k}\lambda_p \frac{1}{2^{k}}
\leq \int_{A_{s}^{k}}f\,d\mu 
\leq \frac{1}{\prod_{p=1}^{k}\lambda_p} \frac{1}{2^{k}}\leq \frac{1}{\eta} \mu(I_s^k).$$

Now it is not difficult to prove that for any finite sequence of scalars $a_1,...,a_k$ we have
$$\eta \left\|\sum_{j=1}^{k} a_j r_j\right\|_1\leq  \left\|\sum_{j=1}^{k} a_j r_{n_j} f\right\|_1 	
\leq \frac{1}{\eta} \left\|\sum_{j=1}^{k} a_j r_j\right\|_1.$$
Thus, the subspace $E_f$ generated by the sequence $(f r_{n_k})_k$ in $L^1([0,1])$ is isomorphic to the subspace $R$ generated by the sequence of Rademacher functions $(r_k)_k$. 

\subsection*{An example of $f\in L^1([0,1]$ with ``{}bad''{} distribution.}

It is clear that  if $E_f$ is the reflexive subspace provided by Proposition \ref{subespacioR} for a positive function $f\in L^1([0,1])$ with $\|f\|_1=1$,  
then
$$G_{E_f}(t)\geq \int_t^{+\infty} F_f(s)ds.$$

\begin{prop} \label{funLog}
The function $f(x)=1/(x \log^2(x/e))$ defined in $[0,1]$ satisfies that $\int_0^1 f(x) dx=1$ and 
$$\int_t^{+\infty} F_f(s)ds 
\geq \frac{1}{4 \log(4t)}.$$
\end{prop}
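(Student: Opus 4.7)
The plan splits into a computation of the tail integral and a reduction of the inequality to a one-variable analytic problem. First, to verify $\int_0^1 f(x)\,dx = 1$, I would apply the substitution $u = 1 - \log x$; this converts the integrand into $1/u^2$ over $[1,+\infty)$ and yields $1$. Next I would identify the super-level sets of $f$. Writing $f = 1/\psi$ with $\psi(x) = x(1-\log x)^2$, the derivative $\psi'(x) = \log^2 x - 1$ shows that $\psi$ strictly increases from $0$ to $4/e$ on $(0, 1/e)$ and strictly decreases from $4/e$ to $1$ on $(1/e, 1)$. Thus $f$ decreases from $+\infty$ to $e/4$ on $(0, 1/e)$ and then climbs back to $1$ at $x = 1$, so that for $t \geq 1$ the set $\{f > t\}$ is the interval $(0, x_t)$, where $x_t \in (0, 1/e]$ is uniquely determined by
\[ x_t (1 - \log x_t)^2 = \frac{1}{t}. \]

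Next I would compute the tail integral using the Riemann--Stieltjes integration by parts already recorded in the paper,
\[ \int_t^{+\infty} F_f(s)\,ds = \int_{\{f > t\}} f\,d\mu - t F_f(t). \]
Combining the antiderivative $\int f\,dx = 1/(1 - \log x)$ with the identity $t x_t = 1/(1 - \log x_t)^2$ (which follows from the defining equation for $x_t$) gives
\[ \int_t^{+\infty} F_f(s)\,ds = \frac{1}{1-\log x_t} - \frac{1}{(1-\log x_t)^2} = \frac{-\log x_t}{(1-\log x_t)^2}. \]
Setting $y := -\log x_t$, which is at least $1$ for $t \geq 1$, this expression becomes $y/(1+y)^2$, while the defining equation rearranges to $t = e^y/(1+y)^2$, so $\log(4t) = \log 4 + y - 2\log(1+y)$.

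With these substitutions the desired inequality is equivalent to
\[ g(y) := 4y\bigl[\log 4 + y - 2\log(1+y)\bigr] - (1+y)^2 \geq 0 \qquad (y \geq 1), \]
and I expect this final step to be the main obstacle. The decisive observation is that $y = 1$ is a \emph{triple} zero: a direct calculation shows $g(1) = 0$, $g'(1) = 0$, and
\[ g''(y) = 6 - \frac{8}{1+y} - \frac{8}{(1+y)^2}. \]
Setting $u = 1/(1+y) \in (0, 1/2]$ for $y \geq 1$, the expression $6 - 8u - 8u^2$ is nonnegative on this interval and vanishes at $u = 1/2$, so $g'' \geq 0$ on $[1, +\infty)$. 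Integrating twice against the boundary data $g(1) = g'(1) = 0$ yields $g \geq 0$ on $[1, +\infty)$, which is precisely the claim.
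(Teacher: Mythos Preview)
Your proof is correct and takes a genuinely different route from the paper's. The paper never writes $\int_t^{\infty} F_f(s)\,ds$ in closed form; instead it uses the functional identity $f(\sqrt{xe}) = (4\sqrt{x}/\sqrt{e})\,f(x)$ to derive the crude lower bound $f^{-1}(t) > e/(16t^2)$ for $t$ larger than some unspecified $t_0$, and then bounds the two pieces separately: $\int_{\{f>t\}} f \ge 1/(2\log(4t))$ from the antiderivative, and $tF_f(t) \le 1/(4\log(4t))$ from the formula $tF_f(t)=1/\log^2(x_t/e)$. Subtracting gives the claim, but only for $t>t_0$. Your approach is tighter: by computing the tail integral exactly as $y/(1+y)^2$ with $y=-\log x_t$ and expressing $\log(4t)$ in the same variable, you reduce everything to the single inequality $g(y)\ge 0$, and the triple-zero/convexity argument dispatches it cleanly on all of $[1,\infty)$. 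The payoff is that you obtain the estimate for every $t\ge 1$ rather than just for large $t$; the paper's argument is quicker to set up but sacrifices that range.
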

\begin{proof}
Some simple calculus shows that $f$ decreases on $[0,1/e]$, increases on $[1/e,1]$, and $f(1)=1$, so for all $t>1$ we have
$F_{f}(t)=\lambda\{x: f(x)>t\}=f^{-1}(t),$
where $f^{-1}$ denotes the inverse function of $f$ defined on $[1,+\infty)$ with values in $(0,1/e)$.

Let us observe that $f(\sqrt{x e})=({4\sqrt{x}}/{\sqrt{e}})f(x)$.  We fix  $0< x_0< 1/e$ such that $f(x_0)=1$ and $t_0=f(x_0^2/e)>1$. If $t>t_0$ and $x_t=f^{-1}(t)$, we have $x_t< x_0^2/e$,  $\sqrt{x_t e} < x_0$, $f(\sqrt{x_t e})>1$ and
$t=(\sqrt{e}f(\sqrt{x_t e})/({4\sqrt{x_t}})>\sqrt{e}/({4\sqrt{x_t}}).$
And we have 
$x_t =f^{-1}(t)> \frac{e}{16 t^2}, \text{ for } t>t_0.$
From this inequality we get 
$$\int_{\{x: f(x)>t\}} f(x) dx =\int_{0}^{f^{-1}(t)} f(x) dx\geq\int_0^{\frac{e} {16 t^2}}f(x) dx=\frac{1} {2\log (4t)}, $$   
and
$$ t F_f(t)=t f^{-1}(t)=f(x_t) x_t= \frac{1}{\log^2(x_t/e)}< \frac{1} {\log^2(16 t^2)} \leq \frac{1}{4 \log(4t)}.$$

To finish, observe that for $t>t_0$ we have
$$
\int_t^{+\infty} F_f(s)ds =\int_{\{x: f(x)>t\}} f(x) dx  - tF_f(t)
\geq \frac{1}{4 \log(4t)}.$$
\end{proof}

\end{document}